\documentclass[10pt]{amsart}
\input{epsf}
\usepackage{amssymb,latexsym}
\usepackage{enumitem}
\topmargin 0 pt \textheight 46\baselineskip \advance\textheight by
\topskip \setlength{\parindent}{0pt} \setlength{\parskip}{5pt plus
2pt minus 1pt} \setlength{\textwidth}{155mm}
\setlength{\oddsidemargin}{5.6mm}
\setlength{\evensidemargin}{5.6mm}

\numberwithin{equation}{section}
\newtheorem{theorem}{Theorem}[section]
\newtheorem{proposition}[theorem]{Proposition}
\newtheorem{corollary}[theorem]{Corollary}

\newtheorem{lemma}[theorem]{Lemma}

\begin{document}

\pagenumbering{arabic}
\pagestyle{headings}
\def\sof{\hfill\rule{2mm}{2mm}}
\def\llim{\lim_{n\rightarrow\infty}}

\title[Chebyshev polynomials and statistics]{Chebyshev polynomials and statistics on a new collection of words in the Catalan family}

\maketitle

\begin{center}

\author{Toufik Mansour\\
\small Department of Mathematics, University of Haifa, 31905 Haifa, Israel\\[-0.8ex]
\small\texttt{tmansour@univ.haifa.ac.il}\\[1.8ex]
Mark Shattuck\\
\small Department of Mathematics, University of Tennessee, Knoxville, TN 37996\\[-0.8ex]
\small\texttt{shattuck@math.utk.edu}\\[1.8ex]}

\end{center}

\begin{abstract}
Recently, a new class of words, denoted by $\mathcal{L}_n$, was shown to be in bijection with a subset of the Dyck paths of length $2n$ having cardinality the Catalan number $C_{n-1}$.  Here, we consider statistics on $\mathcal{L}_n$ recording the number of occurrences of a letter $i$.  In the cases $i=0$ and $i=1$, we are able to determine explicit expressions for the number of members of $\mathcal{L}_n$ containing a given number of zeros or ones, which generalizes a prior result.  To do so, we make use of recurrences to derive a functional equation satisfied by the generating function, which we solve by a new method employing Chebyshev polynomials.  Recurrences and generating function formulas are also provided in the case of general $i$.\\
\end{abstract}

\noindent{\emph{Keywords}}: Chebyshev polynomials; recurrence relation; functional equation; combinatorial statistics

\noindent{\emph{2010 Mathematics Subject Classification}}: 11B37, 05A15, 05A05

\def\P{POGP}
\def\A{\mathcal{A}}
\def\SS{\frak S}
\def\Ps{POGPs}
\def\mn{\mbox{-}}
\def\newop#1{\expandafter\def\csname #1\endcsname{\mathop{\rm #1}\nolimits}}
\newop{MND}


\section{Introduction}

For a positive integer $n$, let $\mathcal{L}_n$ denote the set of all words $a=(a_1,a_2,\ldots,a_n)$ over the alphabet of non-negative integers satisfying
\begin{itemize}
\item[] (i) $a_{i+1} \geq a_i-1$ for $1 \leq i <n$, and\\
\item[] (ii) if $a_i=k>0$ with $i$ minimal, then there exist $i_1<i<i_2$ such that $a_{i_1}=a_{i_2}=k-1$.
    \end{itemize}
Property (i) states that there are no drops of size greater than one, while (ii) says that the left-most occurrence of each $k>0$ has $k-1$ somewhere to its left and somewhere to its right.  For example, there are $5$ members of $\mathcal{L}_4$, namely,
$$000,0100,0010,0110,0101,$$
and $14$ members of $\mathcal{L}_5$, namely,
\begin{align*}
&00000,01000,00100,00010,01100,01010,01001,\\
&00110,00101,01110,01101,01011,01210,01021.
\end{align*}

The set $\mathcal{L}_n$ was recently introduced by Albert, Ru\v{s}kuc, and Vatter \cite{ARV}, who raised the question \cite{V} of finding a bijection between $\mathcal{L}_n$ and any family of classical Catalan objects.  Stump \cite{St} found such a bijection between $\mathcal{L}_n$ and the Dyck paths of length $2n-2$, proving that this class of words belongs to the Catalan family.  The key step in his bijection was the zeta map, which is an important map in the study of $q,t$-Catalan numbers.  At times, we will refer to sequences satisfying properties (i) and (ii) above as \emph{Catalan words}.

In this paper, we prove refinements of Stump's result, which are obtained by counting the members of $\mathcal{L}_n$ according to the number of occurrences of a given letter $i$.  Of particular interest is the case $i=0$, where it is shown that there are $a(n,m)=\frac{m-1}{n-1}\binom{2n-m-2}{n-2}$ Catalan words of length $n$ containing $m$ zeros. To prove this result, we first determine recurrence relations satisfied by the numbers $a(n,m)$.  In terms of generating functions, our recurrences may be expressed in functional equation form as
$$A(x,v)=\frac{xv}{1-xv}(1-A(x,1))+\frac{xv}{1-xv}A(x,1/(1-xv)).$$
This equation is apparently not solvable by the common techniques for solving functional equations, including the kernel method.  So to determine a solution, we iterate it and then are able to express the resulting infinite series expansion in terms of Chebyshev polynomials, the properties of which allow for a significant simplification.  Using our answer for $A(x,v)$, one can then find a simple expression for the generating function of the sequence that counts Catalan words of length $n$ containing $m$ ones as well as an explicit formula for this sequence.

The paper is divided as follows.  In the next section, we find recurrences and closed-form expressions for the number of members of $\mathcal{L}_n$ containing a fixed number of zeros or ones.  In the third section, we consider the more general problem of counting Catalan words according to the number of occurrences of any letter $i>0$.  Here, recurrences and explicit expressions for the generating functions are provided, the latter of which may be expressed in terms of Chebyshev polynomials.

We recall now the definitions of two sequences.  Given $n \geq 0$, let $C_n=\frac{1}{n+1}\binom{2n}{n}$ denote the $n$-th Catalan number \cite{Stan} and
$$C(x):=\sum_{n \geq 0}C_nx^n=\frac{1-\sqrt{1-4x}}{2x}.$$
See A000108 in \cite{Sl} for further information on these numbers.  Stump \cite[Theorem 1]{St} showed that $|\mathcal{L}_n|=C_{n-1}$ for $n \geq 1$.  The Chebyshev polynomials of the second kind \cite{Ri}, denoted by $U_n(x)$, are defined by the recurrence
$$U_n(x)=2xU_{n-1}(x)-U_{n-2}(x), \qquad n \geq 2,$$
along with the initial values $U_0(x)=1$ and $U_1(x)=2x$.

\section{Counting $\mathcal{L}_n$ by the number of zeros or ones}

\subsection{Recurrences}

By a \emph{descent} in a word $w=w_1w_2\cdots w_m$, we will mean an index $i \in [m-1]$ such that $w_i>w_{i+1}$.  Let $a(n,m,k)$ denote the number of Catalan words of length $n$ having $m$ zeros and $k$ descents.  Note that $a(n,m,k)$ can assume non-zero values only when $1 \leq m \leq n$ and $0\leq k <n$.  The numbers $a(n,m,k)$ satisfy the following recurrence.

\begin{proposition}\label{pr1}
If $n > m \geq 1$ and $k \geq 1$, then
\begin{equation}\label{pre1}
a(n,m,k)=\sum_{d=1}^\ell \sum_{j=1}^{n-m} \binom{m-1}{d}\binom{j}{d}a(n-m,j,k-d),
\end{equation}
where $\ell=\min\{m,k\}$, with $a(n,n,k)=\delta_{k,0}$ if $n >k \geq 0$ and $a(n,m,0)=\delta_{n,m}$ if $n \geq m \geq 1$.
\end{proposition}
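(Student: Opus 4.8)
The plan is to establish \eqref{pre1} by a bijection that peels off the zeros of a Catalan word. Throughout, fix a Catalan word $w=w_1\cdots w_n$ with $m$ zeros and $k$ descents, where $n>m\geq 1$ and $k\geq 1$. First I would record two basic facts: property (ii) forces $w_1=0$ (there is nothing to the left of $w_1$), and, more generally, the leftmost positive letter of $w$ must equal $1$, since the only value occurring to its left is $0$. Let $w'$ be the word of length $n-m$ obtained by deleting the zeros of $w$ and subtracting $1$ from each remaining letter. Using property (i) (every descent is a drop by exactly one, since $w_{i+1}\geq w_i-1$) together with properties (i) and (ii) for $w$, I would verify that $w'$ is again a Catalan word and that its zeros are exactly the images of the letters $1$ of $w$; thus if $w'$ has $j$ zeros, then $w$ has $j$ ones.

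Next I would analyze how the zeros sit inside $w$. By property (i), a letter $0$ can be immediately preceded only by a $0$ or a $1$, so every maximal run of zeros of $w$ is either a leading run or is immediately preceded by a letter $1$. Writing $d$ for the number of runs of the latter type, each such run contributes exactly one descent of the form $1\to 0$, while every other descent of $w$ occurs between two adjacent positive letters and so corresponds to a descent of $w'$; hence $w'$ has $k-d$ descents. The crucial point is the interaction with property (ii) at the letter $1$: the leftmost $1$ of $w$ needs a $0$ both to its left (supplied by the leading run, nonempty since $w_1=0$) and to its right (which can only come from a non-leading run). A valid word therefore has $d\geq 1$, which is precisely why the sum in \eqref{pre1} begins at $d=1$.

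Finally I would count the preimages. Starting from a Catalan word $w'$ of length $n-m$ with $j$ zeros and $k-d$ descents, form $w''=w'+1$ and reconstruct $w$ by (a) choosing $d$ of the $j$ occurrences of the letter $1$ in $w''$ after which to attach a run of zeros, contributing $\binom{j}{d}$, and (b) distributing the $m$ zeros among the leading run together with these $d$ trailing runs, each required to be nonempty, which is a composition of $m$ into $d+1$ positive parts and contributes $\binom{m-1}{d}$. Summing over the admissible $d$ and $j$ gives \eqref{pre1}; since $\binom{m-1}{d}=0$ once $d\geq m$, replacing the natural bound $d\leq m-1$ by $\ell=\min\{m,k\}$ changes nothing. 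The boundary cases are immediate: if $k=0$ then $w$ is weakly increasing, which by property (ii) forces $w=0^n$ and hence $m=n$, giving $a(n,m,0)=\delta_{n,m}$; and if $m=n$ then $w=0^n$ has no descent, giving $a(n,n,k)=\delta_{k,0}$.

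The main obstacle is verifying that the reconstruction in the last step always yields a legitimate Catalan word, in particular re-checking property (ii) for every letter. For a letter $v\geq 2$ this reduces to property (ii) for $w'$ at $v-1$, since the required copies of $v-1$ are positive and survive the insertion of zeros; for $v=1$ it is exactly the condition $d\geq 1$ isolated above; and property (i) is straightforward because each inserted $0$ follows a $0$ or a $1$. Once this is confirmed, the two maps are mutually inverse and the recurrence follows.
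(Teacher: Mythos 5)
Your proof is correct and follows essentially the same route as the paper: the same reduction map (delete the zeros and subtract one), the same descent bookkeeping showing the reduced word has $k-d$ descents, and the same preimage count $\binom{j}{d}\binom{m-1}{d}$ obtained by choosing $d$ of the $j$ ones to carry nonempty zero runs and composing $m$ into $d+1$ positive parts. If anything, you make explicit two points the paper leaves implicit---why the sum starts at $d=1$ (the leftmost $1$ needs a zero to its right, forced by property (ii)) and the verification that the reconstructed word satisfies property (ii)---so no changes are needed.
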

\begin{proof}
The boundary conditions are easily verified.  To show \eqref{pre1}, let $\mathcal{L}_{n,m,k}$ denote the subset of $\mathcal{L}_n$ enumerated by $a(n,m,k)$. Given $\lambda \in \mathcal{L}_{n,m,k}$, let $\lambda'$ denote the Catalan word of length $n-m$ obtained by removing the $m$ zeros from $\lambda$, concatenating the remaining letters, and subtracting one from each of these letters.  We will refer to $\lambda'$ as the \emph{reduction of} $\lambda$.  Note that $\lambda' \in \mathcal{L}_{n-m,j,k-d}$ for some $d \in [\ell]$ and $j \in [n-m]$.  We argue that there are $\binom{m-1}{d}\binom{j}{d}a(n-m,j,k-d)$ members of $\mathcal{L}_{n,m,k}$ whose reduction belongs to $\mathcal{L}_{n-m,j,k-d}$, whence \eqref{pre1} follows from summing over $d$ and $j$.

To do so, we will show for each $\lambda' \in \mathcal{L}_{n-m,j,k-d}$ that there are $\binom{m-1}{d}\binom{j}{d}$ members of $\mathcal{L}_{n,m,k}$ whose reduction is $\lambda'$.  We first increment each letter of $\lambda'$ by one and then select $d$ of the $j$ ones in the resulting word, after each of which we write a zero.  Let $\lambda^*$ denote the resulting word of length $n-m+d$.  Note that $\lambda'$ having $k-d$ descents implies $\lambda^*$ has $k$ descents.  We now create members of $\mathcal{L}_{n,m,k}$ from $\lambda^*$ by writing zeros at the beginning or in positions directly following zeros of $\lambda^*$, with at least one zero at the beginning.
(Note that insertion of zeros elsewhere into $\lambda^*$ would create too many descents in the resulting word.)  Write a zero at the beginning of $\lambda^*$ and then insert $m-d-1$ additional zeros in positions of $0\lambda^*$ directly following zeros (with more than one zero in a position being allowed), which can be done in
$$\binom{m-d-1+(d+1)-1}{(d+1)-1}=\binom{m-1}{d}$$
ways.  In this manner, each $\lambda' \in \mathcal{L}_{n-m,j,k-d}$ is seen to give rise to $\binom{m-1}{d}\binom{j}{d}$ members of $\mathcal{L}_{n,m,k}$, which completes the proof of \eqref{pre1}.
\end{proof}

If $1 \leq m \leq n$, then let $a(n,m)$ denote the number of Catalan words of length $n$ that have $m$ zeros. Summing \eqref{pre1} over $k$ and using Vandermonde's identity \cite[p. 169]{GKP} gives the following recurrence for $a(n,m)$.

\begin{corollary}
The numbers $a(n,m)$ satisfy the recurrence
\begin{equation}\label{coe1}
a(n,m)=\sum_{j=1}^{n-m}\left(\binom{j+m-1}{j}-1\right)a(n-m,j), \qquad n \geq 2, \quad 1 \leq m \leq n-1,
\end{equation}
with $a(n,n)=1$ for all $n \geq 1$.
\end{corollary}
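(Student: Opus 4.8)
The plan is to derive the corollary directly from Proposition~\ref{pr1} by summing the recurrence \eqref{pre1} over all admissible values of $k$. Since $a(n,m)=\sum_{k\geq 0}a(n,m,k)$ by definition, I would start from \eqref{pre1}, which covers the case $k\geq 1$, and combine it with the $k=0$ boundary contribution $a(n,m,0)=\delta_{n,m}$. For $1\le m\le n-1$ the term $\delta_{n,m}$ vanishes, so summing \eqref{pre1} over $k\ge 1$ already accounts for every nonzero descent count. This reduces the problem to evaluating $\sum_{k\geq 1}\sum_{d=1}^{\ell}\sum_{j=1}^{n-m}\binom{m-1}{d}\binom{j}{d}a(n-m,j,k-d)$, where $\ell=\min\{m,k\}$.

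The key manipulation is to interchange the order of summation, bringing the sum over $k$ innermost. For fixed $d$ and $j$, as $k$ ranges over $k\ge d$ (the condition forced by $\ell=\min\{m,k\}\ge d$, together with $d\le m$), the quantity $a(n-m,j,k-d)$ runs over $a(n-m,j,s)$ for all $s\ge 0$, whose sum over $s$ is precisely $a(n-m,j)$. This collapses the triple sum to $\sum_{j=1}^{n-m}\sum_{d=1}^{m}\binom{m-1}{d}\binom{j}{d}a(n-m,j)$, where the upper limit on $d$ is now just $m$ since the constraint from $k$ has been absorbed. Factoring out $a(n-m,j)$ leaves the inner sum $\sum_{d=1}^{m}\binom{m-1}{d}\binom{j}{d}$ to evaluate.

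The one genuine computational step is recognizing this inner sum as an instance of Vandermonde's identity. Extending the sum to include $d=0$ gives $\sum_{d=0}^{m}\binom{m-1}{d}\binom{j}{d}=\sum_{d\ge 0}\binom{m-1}{d}\binom{j}{j-d}=\binom{j+m-1}{j}$, using $\binom{j}{d}=\binom{j}{j-d}$ and the standard form of Vandermonde's identity \cite[p.~169]{GKP}. Since the $d=0$ term contributes $\binom{m-1}{0}\binom{j}{0}=1$, the sum starting at $d=1$ equals $\binom{j+m-1}{j}-1$. Substituting this back yields exactly \eqref{coe1}. The boundary condition $a(n,n)=1$ follows immediately from $a(n,n,k)=\delta_{k,0}$ upon summing over $k$.

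I expect the only subtlety to be the careful bookkeeping when interchanging the sums: one must check that the coupled constraint $\ell=\min\{m,k\}$ correctly decouples into the independent ranges $1\le d\le m$ and $k\ge d$, so that the sum over $k$ genuinely telescopes into the full count $a(n-m,j)$ without over- or under-counting any $(d,j,k)$ triple. Once this index juggling is handled cleanly, the remaining Vandermonde evaluation is routine, so the reindexing is the main (though modest) obstacle.
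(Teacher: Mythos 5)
Your proposal is correct and follows exactly the route the paper takes: the corollary is obtained by summing \eqref{pre1} over $k$ (the $k=0$ boundary term vanishing for $m<n$) and collapsing the inner sum $\sum_{d}\binom{m-1}{d}\binom{j}{d}$ via Vandermonde's identity, which is precisely the one-line justification the authors give. Your more careful bookkeeping of the decoupling of the constraint $\ell=\min\{m,k\}$ is a fair elaboration of what the paper leaves implicit, and it is handled correctly.
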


Let $b(n,m)$ denote the number of Catalan words of length $n$ containing $m$ ones.  Note that $b(n,m)$ can assume non-zero values only when $0 \leq m <n$.  We have the following relation between the numbers $b(n,m)$ and $a(n,m)$.

\begin{proposition}\label{pr2}
If $n \geq 3$, then
\begin{equation}\label{pr2e1}
b(n,m)=\sum_{i=2}^{n-m}\left(\binom{i+m-1}{m}-1\right)a(n-i,m), \qquad 1 \leq m \leq n-2,
\end{equation}
with $b(n,0)=1$ if $n \geq 1$ and $b(n,n-1)=0$ if $n \geq 2$.
\end{proposition}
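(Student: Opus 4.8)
The plan is to exploit the same \emph{reduction} map $\lambda \mapsto \lambda'$ used in the proof of Proposition \ref{pr1}, together with the simple but crucial observation that the reduction carries ones to zeros. Indeed, since $\lambda'$ is formed by deleting the zeros of $\lambda$ and subtracting one from each surviving letter, a letter equals $1$ in $\lambda$ precisely when the corresponding letter equals $0$ in $\lambda'$. Hence the number of ones in $\lambda$ coincides with the number of zeros in its reduction $\lambda'$. This reduces the enumeration of length-$n$ Catalan words by their number of ones to a sum over their number of zeros, the fibers of the reduction map having already been analyzed in Proposition \ref{pr1}.

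Next, I would classify the members of $\mathcal{L}_n$ according to their number of zeros, say $M \geq 1$ (recall that property (ii) forces $\lambda_1=0$, so at least one zero is always present). Fixing $M$, the reduction $\lambda'$ has length $n-M$, and by the previous paragraph requiring $\lambda$ to contain exactly $m$ ones is the same as requiring $\lambda'$ to contain exactly $m$ zeros. The reconstruction in the proof of Proposition \ref{pr1}, summed over the number $d \geq 1$ of descents created, shows that each of the $a(n-M,m)$ choices of such a $\lambda'$ arises from exactly $\sum_{d\geq 1}\binom{M-1}{d}\binom{m}{d}=\binom{M+m-1}{m}-1$ words $\lambda$, the collapse following from Vandermonde's identity (this coefficient is exactly the one appearing in the recurrence \eqref{coe1}). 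Summing over $M$ and reindexing by $i=M$ then gives
\begin{equation*}
b(n,m)=\sum_{i\geq 1}\left(\binom{i+m-1}{m}-1\right)a(n-i,m).
\end{equation*}
The $i=1$ term vanishes since $\binom{m}{m}-1=0$, while $a(n-i,m)=0$ unless $n-i\geq m$; restricting to the surviving range $2\leq i\leq n-m$ yields \eqref{pr2e1}.

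It remains to verify the boundary values directly. For $b(n,0)$, a word with no ones can contain no positive letter at all: if $v\geq 1$ were the least positive value present and $v\geq 2$, then the leftmost $v$ would require a $(v-1)$-letter to its left by property (ii), contradicting minimality, so $v=1$, forcing the presence of a one. Thus the only such word is $0^n$, giving $b(n,0)=1$. For $b(n,n-1)$, a length-$n$ word with $n-1$ ones has a single non-one letter, which must be the forced initial zero, so $\lambda=01^{n-1}$; here the leftmost one has no zero to its right, violating property (ii), whence $b(n,n-1)=0$.

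The main point requiring care is the second step: one must correctly match the roles of the two letters under reduction (the ``$m$ ones'' condition on $\lambda$ becoming the ``$m$ zeros'' condition on $\lambda'$) and recognize that the fiber count, after summing over the descent statistic $d$, simplifies by Vandermonde to $\binom{M+m-1}{m}-1$. In particular the $-1$ is not cosmetic: it removes the single reconstruction with $d=0$, namely $0^M$ followed by the letter-incremented $\lambda'$, which fails property (ii) because its leftmost one would have no zero to its right.
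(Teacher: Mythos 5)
Your proof is correct and follows essentially the same route as the paper's: both rest on the correspondence between members of $\mathcal{L}_n$ with $m$ ones and $i$ zeros and members of $\mathcal{L}_{n-i}$ with $m$ zeros (the paper's \eqref{pr2e2}), with fiber size $\binom{i+m-1}{m}-1$. The only cosmetic difference is that you obtain this fiber count by summing the descent-refined counts $\binom{i-1}{d}\binom{m}{d}$ from Proposition \ref{pr1} over $d\geq 1$ and applying Vandermonde, whereas the paper counts directly by distributing $i-1$ zeros into $m+1$ slots and excluding the single placement that violates property (ii); your explicit verification of the boundary values is a detail the paper leaves to the reader.
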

\begin{proof}
The boundary conditions follow from the definitions.  To show \eqref{pr2e1}, suppose $1 \leq m \leq n-2$ and let $b(n,m,i)$ denote the number of members of $\mathcal{L}_n$ containing $m$ ones and $i$ zeros.  To complete the proof, it suffices to show
\begin{equation}\label{pr2e2}
b(n,m,i)=\left(\binom{i+m-1}{m}-1\right)a(n-i,m), \qquad 2 \leq i \leq n-m.
\end{equation}

To do so, suppose $\lambda \in \mathcal{L}_{n-i}$ contains $m$ zeros.  Let $\lambda^*$ denote the word obtained from $\lambda$ by increasing each letter by one.  We form members of $\mathcal{L}_n$ enumerated by $b(n,m,i)$ from $\lambda^*$ by writing zeros at the beginning or in positions directly following ones.  This is equivalent to the problem of distributing $i-1$ zeros in $m+1$ positions, with the requirement that not all of the zeros go in the left-most position.  Thus, there are $\binom{i+m-1}{m}-1$ members of $\mathcal{L}_n$ enumerated by $b(n,m,i)$ that arise in this way and grouping together members associated with the same $\lambda$ gives \eqref{pr2e2}.
\end{proof}

\subsection{Counting by number of zeros or ones}

The main result of this section is as follows.

\begin{theorem}\label{th0}
We have
\begin{equation}\label{th0e1}
a(n,m)=\frac{m-1}{n-1}\binom{2n-m-2}{n-2}, \qquad 2 \leq m \leq n.
\end{equation}
\end{theorem}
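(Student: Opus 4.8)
The plan is to prove \eqref{th0e1} by strong induction on $n$, verifying that the proposed closed form satisfies the recurrence \eqref{coe1} together with the boundary value $a(n,n)=1$. Write $f(n,m)=\frac{m-1}{n-1}\binom{2n-m-2}{n-2}$ for the conjectured value. Note first that $f(n,n)=1$ and $f(n,1)=0$, and that a one-line computation gives $f(n,n-1)=n-2$, which matches the value forced by \eqref{coe1} when $n-m=1$ (there the sum has the single term $(\binom{n-1}{1}-1)a(1,1)=n-2$). I would dispose of these cases by hand; they form the base of the induction and, importantly, cover the degenerate situation $n-m=1$ in which the expression $f(n-m,\cdot)$ has a vanishing denominator.

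For the inductive step with $2\le m\le n-2$, set $N=n-m\ge 2$. By the induction hypothesis, $a(N,j)=f(N,j)=\frac{j-1}{N-1}\binom{2N-j-2}{N-2}$ for $1\le j\le N$ (the value at $j=1$ being $0$ under either reading), so after writing $\binom{2N-j-2}{N-2}=\binom{2N-j-2}{N-j}$, equation \eqref{coe1} becomes $a(n,m)=\frac{1}{N-1}(S_1-S_2)$, where
\[
S_1=\sum_{j\ge1}(j-1)\binom{j+m-1}{j}\binom{2N-j-2}{N-j}, \qquad S_2=\sum_{j\ge1}(j-1)\binom{2N-j-2}{N-j}.
\]

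The heart of the argument is the evaluation of $S_1$. Since the $j=1$ summand vanishes I would extend the range to $j\ge0$, recording only the extra $j=0$ term $-\binom{2N-2}{N}$. The decisive tool is the absorption identity $j\binom{j+m-1}{j}=m\binom{j+m-1}{j-1}$, which splits $S_1$ into two Vandermonde convolutions that are read off as coefficients in a product of the series $(1-x)^{-a}$: namely $\sum_j\binom{j+m-1}{j}\binom{2N-j-2}{N-j}=\binom{2N+m-2}{N}$ and $\sum_j\binom{j+m-1}{j-1}\binom{2N-j-2}{N-j}=\binom{2N+m-2}{N-1}$. Reassembling and correcting for the spurious $j=0$ term yields $S_1=m\binom{2N+m-2}{N-1}-\binom{2N+m-2}{N}+\binom{2N-2}{N}$, while the same computation specialized to $m=1$ gives $S_2=\binom{2N-2}{N}$; hence $S_1-S_2=m\binom{2N+m-2}{N-1}-\binom{2N+m-2}{N}$. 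This evaluation is the main obstacle: producing the two convolutions requires the absorption step and a careful handling of the endpoints, and the cancellation that follows is not visible in advance.

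Finally I would simplify. Using $\binom{2N+m-2}{N-1}=\frac{N}{N+m-1}\binom{2N+m-2}{N}$ and the factorization $mN-N-m+1=(m-1)(N-1)$, the difference collapses to $S_1-S_2=\frac{(m-1)(N-1)}{N+m-1}\binom{2N+m-2}{N}$, so that $a(n,m)=\frac{1}{N-1}(S_1-S_2)=\frac{m-1}{N+m-1}\binom{2N+m-2}{N}$. Since $N=n-m$ gives $\binom{2N+m-2}{N}=\binom{2n-m-2}{n-2}$, this is precisely $f(n,m)$, completing the induction. As a consistency check and a more conceptual alternative, I note that $f(n,m)$ is exactly the number of Dyck paths of semilength $n-1$ with $m-1$ returns to the axis; a statistic-preserving bijection sending the zeros of a word to such returns would give \eqref{th0e1} directly, although making this rigorous appears to require more than the recurrences already at hand.
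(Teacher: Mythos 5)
Your proof is correct, but it follows a genuinely different route from the paper's. The paper never inducts on \eqref{coe1}: it recasts that recurrence as the functional equation \eqref{l1e1} for $A(x,v)$ (Lemma \ref{l1}), which resists the kernel method, so it iterates the equation infinitely, identifies the iterates $L_j(x,v)$ with ratios of Chebyshev polynomials (Lemma \ref{l2}), telescopes the resulting series to obtain $A_m(x)=x^mC^{m-1}(x)$ (Theorem \ref{th2}), and then reads off \eqref{th0e1} from the expansion of $C(x)^m$ quoted from \cite{W}. You instead verify the closed form directly against \eqref{coe1} by strong induction on $n$; your key step --- the absorption identity $j\binom{j+m-1}{j}=m\binom{j+m-1}{j-1}$ followed by the two Vandermonde convolutions evaluating to $\binom{2N+m-2}{N}$ and $\binom{2N+m-2}{N-1}$ --- is sound, as is your endpoint bookkeeping: the spurious $j=0$ term, the separate treatment of $m\in\{n-1,n\}$ where the factor $\frac{1}{N-1}$ would degenerate, and the value $a(N,1)=0$, which you assert and which indeed follows at once from \eqref{coe1} with $m=1$ since every coefficient $\binom{j}{j}-1$ there vanishes (you may want to make that one-line justification explicit). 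The trade-off between the two arguments is clear: yours is elementary and self-contained, needing neither generating functions nor Chebyshev polynomials nor any limit considerations, but it is a verification that presupposes the answer and delivers only the stated formula; the paper's heavier machinery simultaneously produces $A(x,v)=xv/(1-xvC(x))$ in \eqref{th2e2}, which drives the count by ones (Corollary \ref{co4}), the Fine-number result (Corollary \ref{co3}), the Chebyshev identities (Corollaries \ref{co1} and \ref{co2}), the iteration technique reused throughout Section 3, and even an independent re-derivation of $|\mathcal{L}_n|=C_{n-1}$ (Remark 2). Your closing suggestion of a bijection to Dyck paths by number of returns is plausible but, as you note, is not carried out, so the inductive computation is what carries your proof.
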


To show this, we convert the recurrence \eqref{coe1} above to a functional equation, which we solve using Chebyshev polynomials.  Note that formula \eqref{th0e1} follows from combining \eqref{th2e1} below with \cite[Eq. 2.5.16]{W}, which states
$$C(x)^m=\sum_{n \geq 0} \frac{m(2n+m-1)!}{n!(n+m)!}x^n, \qquad m \geq 1.$$

\textbf{Remark 1:} As shown in \cite[Proposition 2]{St}, the statistic recording the number of children of the root on the class of rooted planar trees of size $n+1$ in which the only crucial vertex is the root has the same distribution as the number of zeros on
$\mathcal{L}_n$.  Hence, by \eqref{th0e1}, there are $\frac{m-1}{n-1}\binom{2n-m-2}{n-2}$ such rooted planar trees in which the root has $m$ children.

We now proceed with the proof of Theorem \ref{th0}.  Let $$A(x,v)=\sum_{n\geq1}\sum_{m=1}^n a(n,m)x^nv^m.$$
Then $A(x,v)$ satisfies the following functional equation.

\begin{lemma}\label{l1}
We have
\begin{equation}\label{l1e1}
A(x,v)=\frac{xv}{1-xv}(1-xC(x))+\frac{xv}{1-xv}A(x,1/(1-xv)).
\end{equation}
\end{lemma}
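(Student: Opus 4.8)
The plan is to translate the recurrence \eqref{coe1} directly into the functional equation \eqref{l1e1} by a generating-function computation. First I would split $A(x,v)$ into its diagonal and off-diagonal parts. Since $a(n,n)=1$, the terms with $m=n$ contribute
\[
\sum_{n\ge1}a(n,n)x^nv^n=\sum_{n\ge1}(xv)^n=\frac{xv}{1-xv},
\]
while the terms with $1\le m\le n-1$ are exactly those governed by \eqref{coe1}.

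Next I would substitute \eqref{coe1} into the off-diagonal sum and interchange the order of summation. Setting $N=n-m$ (so that $N\ge1$ precisely when $m\le n-1$) rewrites $\sum_{n\ge2}\sum_{m=1}^{n-1}$ as a sum over $N\ge1$, $1\le j\le N$, and $m\ge1$, giving
\[
\sum_{N\ge1}\sum_{j=1}^{N}a(N,j)x^N\sum_{m\ge1}\left(\binom{j+m-1}{j}-1\right)(xv)^m.
\]
The crux of the argument is evaluating the inner sum in closed form. Writing $t=xv$ and using the negative binomial identity $\sum_{m\ge0}\binom{j+m}{j}t^m=(1-t)^{-(j+1)}$ together with the geometric series $\sum_{m\ge1}t^m=t/(1-t)$, the inner sum collapses to $\frac{t}{1-t}\bigl((1-t)^{-j}-1\bigr)$. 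Substituting this back and pulling the common factor $\frac{xv}{1-xv}$ outside leaves
\[
\frac{xv}{1-xv}\left(\sum_{N\ge1}\sum_{j=1}^{N}a(N,j)x^N\left(\frac{1}{1-xv}\right)^{\!j}-\sum_{N\ge1}\sum_{j=1}^{N}a(N,j)x^N\right),
\]
which I recognize, straight from the definition of $A$, as $\frac{xv}{1-xv}\bigl(A(x,1/(1-xv))-A(x,1)\bigr)$. Combining with the diagonal part yields
\[
A(x,v)=\frac{xv}{1-xv}+\frac{xv}{1-xv}\bigl(A(x,1/(1-xv))-A(x,1)\bigr).
\]

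Finally, to match the stated form I would identify $A(x,1)$. Since $\sum_{m=1}^{n}a(n,m)=|\mathcal{L}_n|=C_{n-1}$ by Stump's theorem, we have $A(x,1)=\sum_{n\ge1}C_{n-1}x^n=xC(x)$. Replacing $A(x,1)$ by $xC(x)$ and regrouping the $\frac{xv}{1-xv}$ terms gives \eqref{l1e1}. The main obstacle is the bookkeeping in the change of variables and index shifts: I expect the delicate point to be confirming that the inner summation range $1\le j\le N$ coincides exactly with the range in the definition of $A$, so that the two resulting double sums really are $A(x,1/(1-xv))$ and $A(x,1)$ and not truncated variants. Once the inner sum is evaluated correctly, the rest is a routine regrouping.
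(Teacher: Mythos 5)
Your proposal is correct and follows essentially the same route as the paper: sum the recurrence \eqref{coe1} against $x^nv^m$, reindex by $N=n-m$, evaluate the inner negative-binomial/geometric series to get the factor $\frac{xv}{1-xv}\bigl((1-xv)^{-j}-1\bigr)$, recognize the two double sums as $A(x,1/(1-xv))$ and $A(x,1)$, and finish by substituting $A(x,1)=xC(x)$ from Stump's theorem. The only difference is cosmetic: you split off the diagonal $m=n$ terms at the outset, whereas the paper moves $\sum_{n\ge1}a(n,n)x^nv^n$ to the left-hand side.
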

\begin{proof}
Multiplying \eqref{coe1} by $x^nv^m$ and summing over $1\leq m\leq n-1$ gives
\begin{align*}
A(x,v)-\sum_{n\geq1}a(n,n)x^nv^n
=\sum_{i\geq1}\sum_{j=1}^i\left[\sum_{\ell\geq0}\left(\binom{j+\ell}{\ell}-1\right)x^{i+\ell+1}v^{\ell+1}\right]a(i,j),
\end{align*}
which is equivalent to
\begin{align}
A(x,v)&=\frac{xv}{1-xv}+\sum_{i\geq1}\sum_{j=1}^i\left(\frac{x^{i+1}v}{(1-xv)^{j+1}}-\frac{x^{i+1}v}{1-xv}\right)a(i,j)\notag\\
&=\frac{xv}{1-xv}+\frac{xv}{1-xv}(A(x,1/(1-xv))-A(x,1)).\label{l1e2}
\end{align}
From \cite[Theorem 1]{St}, we have $A(x,1)=\sum_{n \geq 1}C_{n-1}x^n=xC(x)$, so that \eqref{l1e1} follows from \eqref{l1e2}.
\end{proof}

We now express $A(x,v)$ as an infinite series involving Chebyshev polynomials.

\begin{lemma}\label{l2}
The generating function $A(x,v)$ is given by
\begin{equation}\label{l2e1}
A(x,v)=\frac{v}{C(x)}\sum_{j\geq1}\frac{\sqrt{x}}{\left(U_{j-1}(t)-v\sqrt{x}U_{j-2}(t)\right)\left(U_{j}(t)-v\sqrt{x}U_{j-1}(t)\right)},
\end{equation}
where $t=\frac{1}{2\sqrt{x}}.$
\end{lemma}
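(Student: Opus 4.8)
The plan is to solve the functional equation \eqref{l1e1} by iteration, recognizing the resulting continued-fraction-like structure as being governed by Chebyshev polynomials. The key observation is that the substitution $v \mapsto 1/(1-xv)$ appearing on the right-hand side is a Möbius (linear fractional) transformation, and iterating it produces a sequence of arguments whose numerators and denominators are generated by a linear two-term recurrence — precisely the recurrence defining $U_n$. So the first step is to set up the iteration: write \eqref{l1e1} as $A(x,v) = \frac{xv}{1-xv}(1-xC(x)) + \frac{xv}{1-xv}A(x, \phi(v))$ where $\phi(v) = 1/(1-xv)$, and unfold it repeatedly. This yields an infinite series whose $j$-th term carries a product of factors $\frac{x\phi^{(r)}(v)}{1-x\phi^{(r)}(v)}$ for $r = 0, 1, \ldots, j-1$, where $\phi^{(r)}$ denotes the $r$-fold iterate.

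The next step, and the technical heart of the argument, is to find a closed form for the iterates $\phi^{(r)}(v)$ and for the partial products. The standard device is to track the Möbius map by its $2\times 2$ matrix $\begin{pmatrix} 0 & 1 \\ -x & 1 \end{pmatrix}$ (up to scaling), whose $r$-th power has entries expressible through Chebyshev polynomials once one normalizes by $\sqrt{x}$ and sets $t = \frac{1}{2\sqrt{x}}$. Concretely, I would prove by induction that the linear fractional iterate can be written as
\begin{equation*}
\phi^{(r)}(v) = \frac{U_{r-1}(t) - v\sqrt{x}\,U_{r-2}(t)}{\sqrt{x}\bigl(U_{r}(t) - v\sqrt{x}\,U_{r-1}(t)\bigr)},
\end{equation*}
using the defining recurrence $U_n(t) = 2t\,U_{n-1}(t) - U_{n-2}(t) = \frac{1}{\sqrt{x}}U_{n-1}(t) - U_{n-2}(t)$; the choice of $t$ is exactly what converts the $2t$ coefficient into $1/\sqrt{x}$ so the matrix recurrence matches. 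Once the iterates are in this form, the product of the prefactors telescopes: consecutive factors share the denominator/numerator combinations $U_{j}(t) - v\sqrt{x}\,U_{j-1}(t)$, so the accumulated product collapses to the single quotient $\frac{\sqrt{x}}{\bigl(U_{j-1}(t)-v\sqrt{x}U_{j-2}(t)\bigr)\bigl(U_{j}(t)-v\sqrt{x}U_{j-1}(t)\bigr)}$ appearing in \eqref{l2e1}, after pulling out the overall factor $v/C(x)$.

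To finish, I would account for the contribution of the two constant pieces in \eqref{l1e1}. The term $\frac{xv}{1-xv}(1-xC(x))$ feeds into the $j$-th iterate as $(1 - xC(x))$ times the appropriate product, and the remaining tail $A(x,\phi^{(j)}(v))$ must be shown to vanish as $j \to \infty$ (as a formal power series in $x$, the relevant terms have increasing order, so convergence is in the formal sense). The factor $1 - xC(x)$ should combine with the Chebyshev factors: using $C(x) = \frac{1-\sqrt{1-4x}}{2x}$ together with the generating-function identity $\sum_{j\geq 0} U_j(t) z^j = 1/(1 - 2tz + z^2)$, one checks that the constant $1-xC(x)$ is precisely what produces the clean prefactor $1/C(x)$ after summation. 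I expect the main obstacle to be bookkeeping: getting the Chebyshev indices and the $\sqrt{x}$ powers exactly right in the induction for $\phi^{(r)}(v)$, and verifying the telescoping of the infinite product so that the doubly-indexed factors assemble into the stated form without stray terms. Care is also needed to justify that each step is valid as an identity of formal power series, since the intermediate expressions involve $\sqrt{x}$ and $U_r(t)$ evaluated at $t = 1/(2\sqrt{x})$, which are Laurent-type objects in $\sqrt{x}$ but recombine into honest power series in $x$ in the final answer.
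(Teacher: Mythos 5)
Your proposal is correct and takes essentially the same approach as the paper: iterate the functional equation with $\phi(v)=1/(1-xv)$ (the paper's $L_j(x,v)$), establish by induction a Chebyshev closed form for the iterates (your formula is the simplified, index-shifted version of the paper's \eqref{l2e4}), and telescope the resulting products using $1-xC(x)=C(x)^{-1}$ to obtain \eqref{l2e1}. The differences are cosmetic---your M\"{o}bius-matrix framing of the induction and your explicit attention to formal convergence of the tail term, which the paper leaves implicit.
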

\begin{proof}
Let $L_{-1}(x,v)=v$ and $L_j(x,v)=1/(1-xL_{j-1}(x,v))$ for $j\geq0$. By \eqref{l1e1}, we have
\begin{align}
A(x,L_j(x,v))&=\frac{xL_j(x,v)}{1-xL_j(x,v)}(1-xC(x))+\frac{xL_j(x,v)}{1-xL_j(x,v)}A(x,1/(1-xL_j(x,v)))\notag\\
&=xL_j(x,v)L_{j+1}(x,v)(1-xC(x))+xL_j(x,v)L_{j+1}(x,v)A(x,L_{j+1}(x,v))\notag.
\end{align}
Iterating this last equation an infinite number of times gives
\begin{align}
A(x,L_{-1}(x,v))
&=xL_{-1}(x,v)L_0(x,v)(1-xC(x))+xL_{-1}(x,v)L_0(x,v)A(x,L_0(x,v))\notag\\
&=(1-xC(x))\sum_{j=1}^2x^jL_{-1}(x,v)L_0^2(x,v)\cdots L_{j-2}^2(x,v)L_{j-1}(x,v)\notag\\
&\quad +x^2L_{-1}(x,v)L_0^2(x,v)L_1(x,v)A(x,L_1(x,v))\notag\\
&=\cdots\notag\\
&=(1-xC(x))\sum_{j\geq1}x^jL_{-1}(x,v)L_0^2(x,v)\cdots L_{j-2}^2(x,v)L_{j-1}(x,v).\label{l2e3}
\end{align}
By induction on $j$ and the recurrence for $U_j(x)$, one can show
\begin{equation}\label{l2e4}
L_j(x,v)=\frac{\frac{1-xv}{\sqrt{x}}U_{j-1}(t)-U_{j-2}(t)}{\sqrt{x}\left(\frac{1-xv}{\sqrt{x}}U_{j}(t)-U_{j-1}(t)\right)}, \qquad j \geq -1.
\end{equation}

Substituting \eqref{l2e4} into \eqref{l2e3} and observing some cancellation, we obtain
\begin{equation}\label{l2e5}
A(x,v)=\frac{v}{C(x)}\sum_{j\geq1}\frac{\sqrt{x}}{\left(\frac{1-xv}{\sqrt{x}}U_{j-2}(t)-U_{j-3}(t)\right)\left(\frac{1-xv}{\sqrt{x}}U_{j-1}(t)-U_{j-2}(t)\right)},
\end{equation}
where we have used $1-xC(x)=C(x)^{-1}$.  Formula \eqref{l2e1} now follows from \eqref{l2e5} and the defining relation $U_m(t)=\frac{1}{\sqrt{x}}U_m(t)-U_{m-1}(t)$.
\end{proof}

As a consequence of Lemma \ref{l2}, we obtain the following identity.

\begin{corollary}\label{co1}
We have
\begin{equation}\label{co1e1}
\sum_{j\geq1}\frac{1}{\sqrt{x}U_{j}(t)U_{j+1}(t)}=xC^2(x).
\end{equation}
\end{corollary}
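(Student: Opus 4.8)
The plan is to read off the identity by specializing the series formula \eqref{l2e1} of Lemma \ref{l2} at the single value $v=1$ and comparing it against the known closed form of $A(x,1)$. The point is that when $v=1$ the two linear combinations of Chebyshev polynomials appearing in each denominator of \eqref{l2e1} collapse, via the defining recurrence for $U_j$, into single Chebyshev terms, producing exactly the product $U_j(t)U_{j+1}(t)$ that occurs on the left-hand side of \eqref{co1e1}.

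Concretely, I would first recall from the proof of Lemma \ref{l1} (Stump's result) that $A(x,1)=xC(x)$. Setting $v=1$ in \eqref{l2e1} then gives
\begin{equation*}
xC(x)=\frac{1}{C(x)}\sum_{j\geq1}\frac{\sqrt{x}}{\left(U_{j-1}(t)-\sqrt{x}U_{j-2}(t)\right)\left(U_{j}(t)-\sqrt{x}U_{j-1}(t)\right)}.
\end{equation*}
The key step is to simplify the two factors in each denominator. Since $t=\frac{1}{2\sqrt{x}}$, we have $2t=\frac{1}{\sqrt{x}}$, so the Chebyshev recurrence $U_m(t)=2tU_{m-1}(t)-U_{m-2}(t)$ reads $U_m(t)=\frac{1}{\sqrt{x}}U_{m-1}(t)-U_{m-2}(t)$, which rearranges to
\begin{equation*}
U_{m-1}(t)-\sqrt{x}\,U_{m-2}(t)=\sqrt{x}\,U_{m}(t).
\end{equation*}
Taking $m=j$ and $m=j+1$ respectively yields $U_{j-1}(t)-\sqrt{x}U_{j-2}(t)=\sqrt{x}U_{j}(t)$ and $U_{j}(t)-\sqrt{x}U_{j-1}(t)=\sqrt{x}U_{j+1}(t)$.

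Substituting these two simplifications into the denominator, each summand becomes
\begin{equation*}
\frac{\sqrt{x}}{\sqrt{x}U_{j}(t)\cdot\sqrt{x}U_{j+1}(t)}=\frac{1}{\sqrt{x}\,U_{j}(t)U_{j+1}(t)},
\end{equation*}
so that $xC(x)=\frac{1}{C(x)}\sum_{j\geq1}\frac{1}{\sqrt{x}U_{j}(t)U_{j+1}(t)}$. Multiplying through by $C(x)$ gives \eqref{co1e1}, since $xC(x)\cdot C(x)=xC^2(x)$. There is essentially no serious obstacle here: the only thing to watch is that the recurrence is applied with the correct value $t=\frac{1}{2\sqrt{x}}$ so that $2t=\frac{1}{\sqrt{x}}$, making both denominator factors reduce cleanly; the remaining bookkeeping is routine. (As a sanity check, one can expand both sides in powers of $x$ and confirm that the partial sums reproduce the coefficients $1,2,5,\dots$ of $xC^2(x)=C(x)-1$.)
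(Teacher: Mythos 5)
Your proof is correct and is essentially identical to the paper's: both set $v=1$ in \eqref{l2e1}, collapse each denominator factor via the relation $U_{m-1}(t)-\sqrt{x}\,U_{m-2}(t)=\sqrt{x}\,U_m(t)$ (the Chebyshev recurrence at $t=\frac{1}{2\sqrt{x}}$), and then invoke $A(x,1)=xC(x)$ to conclude. The paper merely performs the denominator simplification without spelling it out, so your write-up is just a more explicit version of the same argument.
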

\begin{proof}
Taking $v=1$ in \eqref{l2e1} gives
\begin{align*}
A(x,1)&=\frac{1}{C(x)}\sum_{j\geq1}\frac{\sqrt{x}}{(U_{j-1}(t)-\sqrt{x}U_{j-2}(t))(U_{j}(t)-\sqrt{x}U_{j-1}(t))}\\
&=\frac{1}{C(x)}\sum_{j\geq1}\frac{1}{\sqrt{x}U_{j}(t)U_{j+1}(t)},
\end{align*}
which implies \eqref{co1e1} since $A(x,1)=xC(x)$.
\end{proof}

We can now determine a closed form expression for $A(x,v)$.

\begin{theorem}\label{th2}
We have
\begin{equation}\label{th2e1}
A_m(x):=\sum_{n \geq m}a(n,m)x^n=x^mC^{m-1}(x), \qquad m \geq 1,
\end{equation}
and hence
\begin{equation}\label{th2e2}
A(x,v)=\sum_{m\geq 1}A_m(x)v^m=\frac{xv}{1-xvC(x)}.
\end{equation}
\end{theorem}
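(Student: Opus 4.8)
The plan is to collapse the Chebyshev series in \eqref{l2e1} directly. First I would abbreviate $D_j:=U_{j-1}(t)-v\sqrt{x}\,U_{j-2}(t)$, so that \eqref{l2e1} becomes $A(x,v)=\frac{v}{C(x)}\sum_{j\geq1}\frac{\sqrt{x}}{D_jD_{j+1}}$ and the entire problem reduces to evaluating $\sum_{j\geq1}\frac{1}{D_jD_{j+1}}$ in closed form. The engine of the computation is the Tur\'an-type identity $U_{j-1}(t)^2-U_{j-2}(t)U_{j}(t)=1$ for Chebyshev polynomials of the second kind. Expanding $U_{j-1}D_j-U_{j-2}D_{j+1}$ and noting that the two cross terms carrying $v\sqrt{x}$ cancel leaves $U_{j-1}^2-U_{j-2}U_{j}=1$, which is precisely the telescoping relation
\[
\frac{1}{D_jD_{j+1}}=\frac{U_{j-1}(t)}{D_{j+1}}-\frac{U_{j-2}(t)}{D_j}.
\]

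Summing the telescoped form from $j=1$ to $N$ and using $U_{-1}(t)=0$ together with $D_1=U_0(t)=1$ (so the lower boundary term vanishes) gives $\sum_{j=1}^{N}\frac{1}{D_jD_{j+1}}=\frac{U_{N-1}(t)}{D_{N+1}}$. The remaining task is to let $N\to\infty$ in the ring of formal power series. I would write $\frac{U_{N-1}}{D_{N+1}}=\bigl(U_N/U_{N-1}-v\sqrt{x}\bigr)^{-1}$ and invoke the Binet formula $U_n(t)=\frac{\rho_+^{\,n+1}-\rho_-^{\,n+1}}{\rho_+-\rho_-}$, where $\rho_\pm=\frac{1\pm\sqrt{1-4x}}{2\sqrt{x}}$ are the roots of $\rho^2-\frac{1}{\sqrt{x}}\rho+1=0$, so that $\rho_+\rho_-=1$ and $\rho_-=\sqrt{x}\,C(x)$. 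Since $\rho_-/\rho_+=\rho_-^2=xC(x)^2$ has positive valuation, the ratio $U_N/U_{N-1}$ converges to the dominant root $\rho_+=1/(\sqrt{x}\,C(x))$, whence $\sum_{j\geq1}\frac{\sqrt{x}}{D_jD_{j+1}}=\frac{\sqrt{x}}{\rho_+-v\sqrt{x}}=\frac{xC(x)}{1-xvC(x)}$. Multiplying by $v/C(x)$ then produces $A(x,v)=\frac{xv}{1-xvC(x)}$, which is \eqref{th2e2}.

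As a consistency check I would set $v=1$ and use $1-xC(x)=C(x)^{-1}$, which recovers Corollary \ref{co1} exactly. Formula \eqref{th2e1} then follows at once by expanding the closed form as a geometric series, $\frac{xv}{1-xvC(x)}=\sum_{m\geq1}x^mC(x)^{m-1}v^m$, and reading off the coefficient of $v^m$. The step I expect to be the main obstacle is the passage $N\to\infty$: one must justify that the telescoped remainder $U_{N-1}/D_{N+1}$ genuinely converges in the $\sqrt{x}$-adic topology and correctly identify its limit, i.e.\ that the subdominant root $\rho_-$ contributes nothing. A shorter but less self-contained alternative bypasses Lemma \ref{l2} altogether: check by direct substitution that $\frac{xv}{1-xvC(x)}$ solves the functional equation \eqref{l1e1}---a computation that collapses to the defining relation $C(x)-xC(x)^2=1$---and then note that the recurrence \eqref{coe1} determines the $a(n,m)$ uniquely by induction on $n$, so \eqref{l1e1} has a unique power-series solution.
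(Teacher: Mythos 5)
Your proof is correct, and it reaches the theorem by a genuinely different route through the same Chebyshev machinery. The paper expands \eqref{l2e1} as a double geometric series in $v$, extracts the coefficient of $v^m$, combines the resulting finite sum by means of $U_{j-2}(t)U_j(t)-U_{j-1}^2(t)=-1$, and only then telescopes in $j$, so it proves \eqref{th2e1} first and gets \eqref{th2e2} by summing over $m$. You instead keep $v$ symbolic and telescope the bivariate series at once, using
\begin{equation*}
U_{j-1}(t)D_j-U_{j-2}(t)D_{j+1}=U_{j-1}^2(t)-U_{j-2}(t)U_j(t)=1,
\end{equation*}
which collapses $\sum_{j\geq1}1/(D_jD_{j+1})$ to the single remainder $U_{N-1}(t)/D_{N+1}$; the Binet-form argument (with $\rho_-/\rho_+=xC^2(x)$ of positive valuation, so the subdominant root is invisible in the limit) correctly identifies the limit, giving \eqref{th2e2} first, with \eqref{th2e1} then falling out as a geometric-series coefficient. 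So the logical order of the two assertions is reversed relative to the paper, the double-sum manipulation and coefficient extraction are avoided entirely, and your intermediate identity $\sum_{j\geq1}\frac{1}{\sqrt{x}D_jD_{j+1}}=\frac{C(x)}{1-xvC(x)}$ is exactly \eqref{co2e1}: in your arrangement, Corollary \ref{co2} is the engine of the proof rather than a byproduct of the theorem, and Corollary \ref{co1} becomes the $v=1$ consistency check. Your fallback argument is also sound and is the most elementary of all: substituting $xv/(1-xvC(x))$ into \eqref{l1e1} does reduce to $C(x)=1+xC^2(x)$, and \eqref{l1e1} has a unique power-series solution because the coefficient of $x^n$ on its right-hand side involves only coefficients of $x^i$ with $i<n$; this bypasses Lemma \ref{l2} and Chebyshev polynomials altogether, at the cost of pulling the closed form out of thin air and losing the identities \eqref{co1e1} and \eqref{co2e1}, which are of independent interest.
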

\begin{proof}
Formula \eqref{th2e2} follows immediately from \eqref{th2e1}, so we only need to show \eqref{th2e1}.  Expanding \eqref{l2e1} using a geometric series yields
\begin{align*}
A(x,v)
&=\frac{1}{C(x)}\sum_{j\geq1}\sum_{k,\ell\geq0}\frac{v^{k+\ell+1}\sqrt{x}^{k+\ell+1}U_{j-2}^k(t)U_{j-1}^{\ell}(t)}{U_{j-1}^{k+1}(t)U_j^{\ell+1}(t)},
\end{align*}
and extracting the coefficient of $v^m$ in the last expression gives
\begin{align*}
A_m(x)
&=\frac{1}{C(x)}\sum_{j\geq1}\sum_{k=0}^{m-1}\frac{\sqrt{x}^mU_{j-2}^k(t)U_{j-1}^{m-2-2k}(t)}{U_j^{m-k}(t)}\\
&=\frac{\sqrt{x}^m}{C(x)}\sum_{j\geq1}\frac{U_{j-2}^m(t)U_j^m(t)-U_{j-1}^{2m}(t)}{U_{j-1}^{m}(t)U_j^m(t)(U_{j-2}(t)U_j(t)-U_{j-1}^2(t))}.
\end{align*}
By the fact that $U_{j-2}(t)U_j(t)-U_{j-1}^2(t))=-1$, we obtain
\begin{align*}
A_m(x)
&=\frac{\sqrt{x}^m}{C(x)}\sum_{j\geq1}\frac{U_{j-1}^{2m}(t)-U_{j-2}^m(t)U_j^m(t)}{U_{j-1}^{m}(t)U_j^m(t)}
=\frac{x^m}{C(x)}\sum_{j\geq1}\left(\frac{U_{j-1}^{m}(t)}{\sqrt{x}^mU_j^m(t)}-\frac{U_{j-2}^m(t)}{\sqrt{x}^mU_{j-1}^{m}(t)}\right)\\
&=\frac{x^m}{C(x)}\lim_{j\rightarrow\infty}\left(\frac{U_{j-1}^{m}(t)}{\sqrt{x}^mU_j^m(t)}\right).
\end{align*}
Formula \eqref{th2e1} now follows from the fact that $\lim_{j\rightarrow\infty}\left(\frac{U_{j-1}(t)}{\sqrt{x}U_j(t)}\right)=C(x)$ (see \cite{MV1}).
\end{proof}

Combining Lemma \ref{l2} and Theorem \ref{th2}, we obtain the following identity.

\begin{corollary}\label{co2}
We have
\begin{equation}\label{co2e1}
\sum_{j\geq1}\frac{1}{\sqrt{x}\left(U_{j-1}(t)-v\sqrt{x}U_{j-2}(t)\right)\left(U_{j}(t)-v\sqrt{x}U_{j-1}(t)\right)}=\sum_{m\geq1}x^{m-1}v^{m-1}C^{m}(x)
=\frac{C(x)}{1-xvC(x)}.
\end{equation}
\end{corollary}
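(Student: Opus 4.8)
The plan is to read off the corollary as a direct algebraic consequence of equating the two formulas for $A(x,v)$ that we already have in hand: the Chebyshev series of Lemma \ref{l2} and the closed form of Theorem \ref{th2}. The only genuine task is to isolate the relevant sum from \eqref{l2e1} and then substitute the evaluation of $A(x,v)$.

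First I would observe that the summand appearing in the statement of the corollary differs from the summand in \eqref{l2e1} only by a factor of $1/x$: in \eqref{l2e1} the numerator carries $\sqrt{x}$, while here we have $1/\sqrt{x}$ against the same product of linear combinations of Chebyshev polynomials. Denoting the corollary's sum by $S$, Lemma \ref{l2} can therefore be rewritten as
$$A(x,v)=\frac{v}{C(x)}\cdot xS=\frac{xv}{C(x)}S,$$
so that solving for $S$ gives
$$S=\frac{C(x)}{xv}\,A(x,v).$$

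Next I would substitute the value of $A(x,v)$ from Theorem \ref{th2}. Using the power-series form implicit in \eqref{th2e1} and \eqref{th2e2}, namely $A(x,v)=\sum_{m\geq1}x^mC^{m-1}(x)v^m$, I obtain
$$S=\frac{C(x)}{xv}\sum_{m\geq1}x^mC^{m-1}(x)v^m=\sum_{m\geq1}x^{m-1}v^{m-1}C^m(x),$$
which is precisely the middle expression in \eqref{co2e1}. Finally, factoring out $C(x)$ and summing the resulting geometric series in $xvC(x)$ yields
$$\sum_{m\geq1}x^{m-1}v^{m-1}C^m(x)=C(x)\sum_{m\geq0}\bigl(xvC(x)\bigr)^m=\frac{C(x)}{1-xvC(x)},$$
which agrees with the closed form \eqref{th2e2} and completes the chain of equalities.

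I do not anticipate any real obstacle here, since the argument is a one-line manipulation once \eqref{l2e1} is solved for the sum. The only point requiring care is bookkeeping the $\sqrt{x}$ normalization: one must correctly account for the factor $1/x$ between the two summands and the prefactor $v/C(x)$ in \eqref{l2e1}, after which the identity falls out immediately from the two established evaluations of $A(x,v)$.
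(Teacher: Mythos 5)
Your proposal is correct and matches the paper's (implicit) argument exactly: the corollary is stated as a consequence of combining Lemma \ref{l2} with Theorem \ref{th2}, which is precisely your manipulation of solving \eqref{l2e1} for the sum $S$ and substituting $A(x,v)=\sum_{m\geq1}x^mC^{m-1}(x)v^m=\frac{xv}{1-xvC(x)}$. Your bookkeeping of the factor $x$ between the two summand normalizations ($\sqrt{x}$ in the numerator versus $1/\sqrt{x}$) is also correct.
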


Taking $v=1$ in \eqref{th2e1} and summing over odd $m$, we obtain the following result.

\begin{corollary}\label{co3}
We have
\begin{equation}\label{co3e1}
\sum_{m \geq 0}A_{2m+1}(x)=\frac{x}{1-x^2C^2(x)}=\frac{1-\sqrt{1-4x}}{3-\sqrt{1-4x}},
\end{equation}
and thus the number of members of $\mathcal{L}_n$ having an odd number of zeros is given by A000957 in \cite{Sl} (the Fine Numbers).
\end{corollary}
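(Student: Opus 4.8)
The plan is to sum the explicit product formula \eqref{th2e1} for $A_m(x)$ over the odd indices $m$ and recognize the result as a geometric series. Writing the odd indices as $m=2k+1$ with $k\geq 0$, equation \eqref{th2e1} gives $A_{2k+1}(x)=x^{2k+1}C^{2k}(x)=x\,(x^2C^2(x))^k$, so that
$$\sum_{m \geq 0}A_{2m+1}(x) = x\sum_{k \geq 0}\bigl(x^2C^2(x)\bigr)^k = \frac{x}{1-x^2C^2(x)},$$
the geometric series being summable as a formal power series since $x^2C^2(x)$ has lowest-order term $x^2$. This establishes the first equality in \eqref{co3e1} at once.

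For the second equality I would substitute the closed form $C(x)=\frac{1-\sqrt{1-4x}}{2x}$ and simplify. Setting $s=\sqrt{1-4x}$, one has $xC(x)=\frac{1-s}{2}$, hence $x^2C^2(x)=\frac{(1-s)^2}{4}$ and, by the difference of squares, $1-x^2C^2(x)=\frac{4-(1-s)^2}{4}=\frac{(1+s)(3-s)}{4}$. Since $s^2=1-4x$ gives $4x=1-s^2=(1-s)(1+s)$, the numerator $x$ and the denominator share the factor $1+s$, and cancelling it yields $\frac{x}{1-x^2C^2(x)}=\frac{1-s}{3-s}=\frac{1-\sqrt{1-4x}}{3-\sqrt{1-4x}}$, as claimed.

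Finally, to justify the combinatorial statement I would identify this generating function (rational in $\sqrt{1-4x}$) with the known generating function of the Fine numbers, either by citing the standard expression for A000957 in \cite{Sl} or by verifying that the coefficient of $x^n$ agrees with the Fine sequence on an initial segment; expanding gives $x+x^3+2x^4+6x^5+\cdots$, matching the counts $1,0,1,2,6$ of members of $\mathcal{L}_n$ having an odd number of zeros for $n=1,\ldots,5$. Because the coefficient of $x^n$ in $\sum_{m\geq 0}A_{2m+1}(x)$ is, by definition, the number of members of $\mathcal{L}_n$ with an odd number of zeros, this identification completes the proof.

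There is no serious obstacle here: the argument is entirely a matter of summing a geometric series and performing one radical simplification. The only points requiring care are the bookkeeping in the algebraic step --- in particular spotting the common factor $1+s$ so that the expression collapses to $\frac{1-s}{3-s}$ --- and correctly matching the result to the Fine-number generating function rather than to a near-variant.
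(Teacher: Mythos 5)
Your proposal is correct and follows essentially the same route as the paper, which obtains the corollary by summing the formula $A_m(x)=x^mC^{m-1}(x)$ of Theorem \ref{th2} over odd $m$ as a geometric series. You simply make explicit two steps the paper leaves to the reader---the radical simplification via $4x=(1-s)(1+s)$ with $s=\sqrt{1-4x}$, and the identification with the Fine numbers---both of which check out.
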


One can also give an explicit generating function formula for the statistic recording the number of ones.  Let $$B(x,v)=\sum_{n \geq 1}\sum_{m=0}^{n-1} b(n,m)x^nv^m.$$

\begin{corollary}\label{co4}
We have
\begin{equation}\label{co4e1}
B(x,v)=\frac{x(1-x+x^2v+xv(x-2)C(x)+x^2v^2C^2(x))}{(1-x)(1-xvC(x))(1-x-xvC(x))}
\end{equation}
and
\begin{equation}\label{co4e1.5}
b(n,m)=\sum_{j=m}^{n-1} \frac{m-1}{n+m-j-2}\binom{2n+m-2j-4}{n-j-1}\left(\binom{j}{m}-1\right), \qquad 2 \leq m \leq n.
\end{equation}
\end{corollary}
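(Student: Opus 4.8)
The plan is to derive the generating function \eqref{co4e1} directly from the recurrence \eqref{pr2e1} of Proposition \ref{pr2}, using the closed form $A_m(x)=x^mC^{m-1}(x)$ furnished by Theorem \ref{th2}, and then to read off the explicit formula \eqref{co4e1.5} by substituting the expression for $a(n,m)$ from Theorem \ref{th0} back into that same recurrence. First I would isolate the boundary case $m=0$, which contributes $\sum_{n\geq1}b(n,0)x^n=\sum_{n\geq1}x^n=\frac{x}{1-x}$ to $B(x,v)$. For $m\geq1$, I would multiply \eqref{pr2e1} by $x^nv^m$ and sum; upon setting $\ell=n-i$ the constraints $2\leq i\leq n-m$ and $\ell\geq m$ decouple, so that $\sum_n b(n,m)x^n$ factors as the product of $A_m(x)$ with the ``insertion'' series $\sum_{i\geq2}\bigl(\binom{i+m-1}{m}-1\bigr)x^i$.

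The next step is to evaluate this insertion series in closed form. Starting from the standard identity $\sum_{i\geq0}\binom{i+m-1}{m}x^i=\frac{x}{(1-x)^{m+1}}$, subtracting the $i=0,1$ terms and subtracting $\sum_{i\geq2}x^i=\frac{x^2}{1-x}$, I expect to obtain $\sum_{i\geq2}\bigl(\binom{i+m-1}{m}-1\bigr)x^i=\frac{x}{1-x}\bigl(\frac{1}{(1-x)^m}-1\bigr)$. Inserting $A_m(x)=x^mC^{m-1}(x)$ then gives $\sum_n b(n,m)x^n=\frac{x}{1-x}\,x^mC^{m-1}(x)\bigl(\frac{1}{(1-x)^m}-1\bigr)$, and multiplying by $v^m$ and summing over $m\geq1$ produces two geometric series with ratios $\frac{xvC(x)}{1-x}$ and $xvC(x)$, summing to $\frac{xv}{1-x-xvC(x)}$ and $\frac{xv}{1-xvC(x)}$ respectively. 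Recombining these with the $m=0$ term over the common denominator $(1-x)(1-xvC(x))(1-x-xvC(x))$, and using that the two middle fractions differ by $\frac{x}{(1-x-xvC(x))(1-xvC(x))}$, I would expand the resulting numerator (writing $u=xvC(x)$ for brevity, so that $x(1-x-u)(1-u)+x^3v$ must be simplified) and expect it to collapse to $x(1-x+x^2v+xv(x-2)C(x)+x^2v^2C^2(x))$, which is exactly \eqref{co4e1}.

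For \eqref{co4e1.5}, I would work with \eqref{pr2e1} termwise and substitute $a(n-i,m)=\frac{m-1}{n-i-1}\binom{2(n-i)-m-2}{n-i-2}$ from Theorem \ref{th0}, valid throughout $2\leq m\leq n-i$ and in particular at the boundary $i=n-m$, where it correctly reduces to $a(m,m)=1$. Changing the summation variable via $j=i+m-1$ turns $\binom{i+m-1}{m}-1$ into $\binom{j}{m}-1$, turns $n-i-1$ into $n+m-j-2$, and turns $2(n-i)-m-2$ into $2n+m-2j-4$; rewriting $\binom{2(n-i)-m-2}{n-i-2}=\binom{2(n-i)-m-2}{n-i-m}$ by the symmetry $\binom{N}{K}=\binom{N}{N-K}$ then converts the lower index into $n-j-1$. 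Since the $j=m$ term vanishes because $\binom{m}{m}-1=0$, the sum may be started at $j=m$, producing precisely \eqref{co4e1.5}; I would finish by checking that the degenerate values $m=n-1$ and $m=n$ give $0$, matching the known boundary values of $b(n,m)$.

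The main obstacle I anticipate is not conceptual but the sustained algebraic bookkeeping in the second paragraph: correctly combining the two geometric series, clearing the triple denominator, and verifying that the expanded numerator matches the compact form in \eqref{co4e1} without sign or factor errors. The factorization of the double sum and the binomial-symmetry change of variables are the only genuinely structural steps, and both become transparent once the ranges of summation have been tracked carefully.
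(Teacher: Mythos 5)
Your proposal is correct and follows essentially the same route as the paper: you convert the recurrence of Proposition \ref{pr2} into generating-function form, insert the closed form of $A$ from Theorem \ref{th2} (you use $A_m(x)=x^mC^{m-1}(x)$ termwise and re-sum geometric series where the paper substitutes $v\mapsto v/(1-x)$ directly into $A(x,v)$, which is the same computation), and obtain \eqref{co4e1.5} by combining \eqref{pr2e1} with \eqref{th0e1}, exactly one of the two options the paper indicates. All the steps you outline (the binomial series evaluation, the geometric sums, and the binomial-symmetry change of variables $j=i+m-1$) check out.
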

\begin{proof}
By \eqref{pr2e1} and the definition of $A(x,v)$, we have
\begin{align*}
B(x,v)&=\frac{x}{1-x}+\sum_{n \geq 3}\sum_{m=1}^{n-2}b(n,m)x^nv^m\\
&=\frac{x}{1-x}+\sum_{n \geq 3}\sum_{m=1}^{n-2} \sum_{i=2}^{n-m}\left(\binom{i+m-1}{m}-1\right)a(n-i,m)x^nv^m\\
&=\frac{x}{1-x} + \frac{x}{1-x}\left(A(x,v/(1-x))-A(x,v)\right).
\end{align*}
Using \eqref{th2e2} in the last expression, and simplifying, gives \eqref{co4e1}.  Formula \eqref{co4e1.5} follows either from combining \eqref{pr2e1} and \eqref{th0e1} or from extracting the coefficient of $x^nv^m$ in \eqref{co4e1}.
\end{proof}

\textbf{Remark 2:} Though it appears that we have used the $v=1$ case of formula \eqref{th2e2} in its proof, this can in fact be avoided by letting $A(x,1)=g(x)$ at the step where we made the substitution $A(x,1)=xC(x)$ in \eqref{l1e2}.  Doing so, then one gets, instead of \eqref{th2e1}, the relation
$$A_m(x)=x^mC^m(x)(1-g(x)), \qquad m \geq 1.$$
Summing over $m \geq 1$ gives
$$\frac{g(x)}{1-g(x)}=\frac{xC(x)}{1-xC(x)},$$
and thus $A(x,1)=xC(x)$.  Therefore, our proof demonstrates the fact $|\mathcal{L}_n|=C_{n-1}$, which was shown bijectively in \cite{St}.

\textbf{Remark 3:} We were unable to find an explicit formula for the numbers $a(n,m,k)$ satisfying \eqref{pre1}, which would provide a refinement of Theorem \ref{th0}.

\section{Counting $\mathcal{L}_n$ by the number of occurrences of any letter $i>0$}

We first define an array $a_i(n,s,t)$, where $s>0$.  Given $n \geq 2$, $1 \leq i \leq \lfloor\frac{n-1}{2}\rfloor$, $2 \leq t \leq n$, and $1 \leq s \leq n-t-2i+2$, let $a_{i}(n,s,t)$ denote the number of members of $\mathcal{L}_n$ in which there are exactly $s$ occurrences of the letter $i$ and $t$ zeros.  Note that in order for the letter $i$ to occur at all, each of the letters in $[i-1]$ must occur at least twice. Since zero occurs $t$ times, we get the upper bound $s \leq n-t-2(i-1)$.

If $s=0$, then $a_i(n,0,t)$ is the number of members of $\mathcal{L}_n$ not containing any letters $i$ and containing exactly $t$ zeros, where $1\leq t \leq n$ and $i \geq 1$. In this case, there is no restriction on $i$, given $1 \leq t \leq n$.  Furthermore, observe that if $i> \lfloor\frac{n-1}{2}\rfloor$, then $a_i(n,0,t)=a(n,t)$, where $a(n,t)$ is as in the previous section.  By contrast, we have $a_i(n,s,t)=0$ if $s>0$ and $i> \lfloor\frac{n-1}{2}\rfloor$.

Note that the maximum letter in a sequence enumerated by $a_i(n,0,t)$ is at most $i-1$, which implies that the difference $a_{i+1}(n,0,t)-a_i(n,0,t)$ gives the number of members of $\mathcal{L}_n$ whose largest letter is $i$.

The following proposition provides recurrences for the array $a_i(n,s,t)$.

\begin{proposition}\label{pr3}
If $s>0$, then
\begin{equation}\label{rec1}
a_i(n,s,t)=\sum_{\ell=2}^m\left(\binom{\ell+t-1}{\ell}-1\right) a_{i-1}(n-t,s,\ell), \qquad i>1~\text{ and }~ s+t < n-2i+3,
\end{equation}
where $m=n-s-t-2i+4$.  If $i=1$, then
\begin{equation}\label{rec1.5}
a_1(n,s,t)=\left(\binom{s+t-1}{s}-1\right)a(n-t,s), \qquad 2 \leq t \leq n-s.
\end{equation}
If $s=0$, then
\begin{equation}\label{rec2}
a_i(n,0,t)=\delta_{n,t}+\sum_{\ell=1}^{n-t}\left(\binom{\ell+t-1}{\ell}-1\right) a_{i-1}(n-t,0,\ell), \qquad i\geq 1~\text{ and }~ 1 \leq t \leq n,
\end{equation}
where $a_0(n-t,0,\ell)$ is understood to always be zero.
\end{proposition}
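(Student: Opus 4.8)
The plan is to adapt the reduction-and-insertion argument from the proofs of Propositions \ref{pr1} and \ref{pr2}, now tracking two statistics simultaneously. Given $\lambda \in \mathcal{L}_n$ with exactly $s$ occurrences of the letter $i$ and exactly $t$ zeros, let $\lambda'$ be its reduction, obtained by deleting the $t$ zeros and subtracting one from each remaining letter, so that $\lambda' \in \mathcal{L}_{n-t}$. This operation sends each occurrence of $i$ to an occurrence of $i-1$ and each occurrence of $1$ to an occurrence of $0$. Hence, writing $\ell$ for the number of ones in $\lambda$, the word $\lambda'$ has exactly $s$ occurrences of $i-1$ and exactly $\ell$ zeros; that is, $\lambda'$ is enumerated by $a_{i-1}(n-t,s,\ell)$ when $i>1$, and by $a(n-t,s)$ when $i=1$ (in the latter case $\ell=s$ is forced, since the ones of $\lambda$ are precisely the statistic being recorded).

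First I would count the preimages of a fixed $\lambda'$. Incrementing every letter of $\lambda'$ by one yields a word $\lambda^*$ with exactly $\ell$ ones, and the preimages are recovered by inserting the $t$ deleted zeros. As in the proof of Proposition \ref{pr2}, property (i) forces each inserted zero to be placed at the front or immediately after a one; the word must begin with a zero; and property (ii) for the letter $1$ requires a zero to follow the leftmost one, so not every inserted zero may go at the front. Distributing $t$ zeros among the $\ell+1$ admissible slots subject to these constraints gives $\binom{\ell+t-1}{\ell}-1$ preimages, exactly as in \eqref{pr2e2}. Properties (i) and (ii) for the letters $\geq 2$ are inherited from $\lambda'$, since inserting zeros does not alter the relative order of the positive letters, so each resulting word genuinely lies in $\mathcal{L}_n$. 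Taking $i=1$, where $\ell=s$ and no summation is needed, yields \eqref{rec1.5} at once, while for $i>1$ and for $s=0$ we are reduced to summing $\left(\binom{\ell+t-1}{\ell}-1\right)a_{i-1}(n-t,s,\ell)$ over the admissible $\ell$.

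It then remains to fix the range of $\ell$ and to isolate the degenerate word. For \eqref{rec1}, since $\lambda$ contains $i \geq 2$, applying property (ii) down the chain $i, i-1, \dots, 2$ shows that the letter $1$ occurs at least twice, so $\ell \geq 2$; likewise $\lambda'$ contains $i-1$, which forces each of $1, \dots, i-2$ to occur at least twice in $\lambda'$, whence $n-t \geq \ell + s + 2(i-2)$ and $\ell \leq n-s-t-2i+4=m$. For \eqref{rec2} with $s=0$, the only word with no ones is $0^n$, which occurs precisely when $t=n$ and accounts for the term $\delta_{n,t}$; every other admissible $\lambda$ has a positive letter and hence a one, so $\ell$ ranges from $1$ to the length bound $n-t$, and the convention $a_0 \equiv 0$ correctly records that no nonconstant word can avoid the letter $1$ altogether. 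I expect the main obstacle to be pinning down these summation limits, especially the upper bound $m$ in \eqref{rec1}, which relies on the structural fact that each letter below $i-1$ must appear at least twice; the insertion count itself is essentially identical to the one already established for Proposition \ref{pr2}.
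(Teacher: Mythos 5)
Your proof is correct and takes essentially the same approach as the paper's: reduce $\lambda$ by deleting its $t$ zeros and decrementing the remaining letters, count the preimages of a fixed reduction by the zero-insertion argument of Proposition \ref{pr2} (giving the factor $\binom{\ell+t-1}{\ell}-1$), and sum over the admissible number $\ell$ of ones. You in fact supply details the paper compresses into ``by prior reasoning'' and ``it is obvious,'' namely the justification of the summation bounds $2\leq\ell\leq m$ via the forced double occurrence of each letter in $[i-2]$, and the isolation of the word $0^n$ behind the $\delta_{n,t}$ term in \eqref{rec2}.
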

\begin{proof}
First assume $s>0$.  Note that for $i=1$, relation \eqref{rec1.5} is equivalent to \eqref{pr2e2} above.  If $i>1$, suppose $\lambda \in \mathcal{L}_n$ is enumerated by $a_i(n,s,t)$.  Then the reduction of $\lambda$ (see proof of Proposition \ref{pr1} above for definition) is a member $\lambda'$ of $\mathcal{L}_{n-t}$ counted by $a_{i-1}(n-t,s,\ell)$ for some $2 \leq \ell \leq m$.  On the other hand,
starting with $\lambda'$ as given, there are $\binom{\ell+t-1}{\ell}-1$ members $\lambda \in \mathcal{L}_n$ whose reduction is $\lambda'$, by prior reasoning.  Summing over all possible $\ell$ then gives \eqref{rec1}.  Similar reasoning applies to \eqref{rec2} in the case when $i>1$ and $t<n$.  If $i=1$ or $t=n$, then it is obvious that $a_i(n,0,t)=\delta_{n,t}$, which completes the proof.
\end{proof}

We now seek a generating function formula for the arrays $a_i(n,s,t)$.  First suppose $s>0$.  Define $A_i(x|s,t)=\sum_{n\geq s+t+2i-2}a_i(n,s,t)x^n$ for $i \geq 1$, with $A(x|s)=\sum_{n\geq s}a(n,s)x^n$. Multiplying \eqref{rec1} and \eqref{rec1.5} by $x^n$ and summing over $n\geq s+t+2i-2$ gives
$$A_i(x|s,t)=x^t\sum_{\ell\geq2}\left[\binom{\ell+t-1}{\ell}-1\right]A_{i-1}(x|s,\ell)$$
and
$$A_1(x|s,t)=x^t\left[\binom{s+t-1}{s}-1\right]A(x|s).$$

Define $A_i(x,w|s)=\sum_{t\geq1}A_i(x|s,t)w^t$. Then the preceding recurrences yield
$$A_i(x,w|s)=\frac{xw}{1-xw}(A_{i-1}(x,1/(1-xw)|s)-A_{i-1}(x,1|s))$$
and
$$A_1(x,w|s)=\frac{xw}{1-xw}\left(\frac{1}{(1-xw)^{s}}-1\right)A(x|s).$$

Define $A_i(x,w,v)=\sum_{s\geq1}A_i(x,w|s)v^s$. Then we have
$$A_i(x,w,v)=\frac{xw}{1-xw}(A_{i-1}(x,1/(1-xw),v)-A_{i-1}(x,1,v)), \qquad i>1,$$
with
$$A_1(x,w,v)=\frac{xw}{1-xw}(A(x,v/(1-xw))-A(x,v)),$$
where $A(x,v)$ is as before.

Define $A(x,w,v,q)=\sum_{i\geq1}A_i(x,w,v)q^i$. Then
$$A(x,w,v,q)=\frac{xwq}{1-xw}(A(x,v/(1-xw))-A(x,v))+\frac{xwq}{1-xw}(A(x,1/(1-xw),v,q)-A(x,1,v,q)).$$
By iterating this last equation an infinite number of times, we obtain
\begin{align*}
A(x,w,v,q)=\sum_{j\geq0}&\frac{xqL_{j-1}(x,w)}{1-xL_{j-1}(x,w)}(A(x,v/(1-xL_{j-1}(x,w))-A(x,v)\\
&-A(x,1,v,q))\prod_{i=0}^{j-1}\frac{xqL_{i-1}(x,w)}{1-xL_{i-1}(x,w)},
\end{align*}
where $L_j(x,w)$ is as in the previous section.  Since $1-xL_j(x,w)=\frac{1}{L_{j+1}(x,w)}$, the last equation may be rewritten as
\begin{align*}
&A(x,w,v,q)\\
&=\sum_{j\geq0}(A(x,vL_{j}(x,w))-A(x,v)-A(x,1,v,q))\prod_{i=0}^{j}xqL_{i-1}(x,w)L_{i}(x,w)\\
&=\sum_{j\geq0}x^{j+1}q^{j+1}(A(x,vL_{j}(x,w))-A(x,v)-A(x,1,v,q))L_{-1}(x,w)L_0^2(x,w)L_1^2(x,w)\cdots\\ &\qquad \quad L_{j-1}^2(x,w)L_j(x,w)\\
&=\sum_{j\geq0}\frac{w\sqrt{x}q^{j+1}(A(x,vL_{j}(x,w))-A(x,v)-A(x,1,v,q))}{(U_{j+1}(t)-w\sqrt{x}U_j(t))(U_{j}(t)-w\sqrt{x}U_{j-1}(t))},
\end{align*}
where $t=\frac{1}{2\sqrt{x}}$.  Setting $w=1$ gives
\begin{align*}
A(x,1,v,q)&=\sum_{j\geq0}\frac{q^{j+1}(A(x,vL_{j}(x,1))-A(x,v)-A(x,1,v,q))}{\sqrt{x}U_{j+2}(t)U_{j+1}(t)},
\end{align*}
and solving for $A(x,1,v,q)$ implies
\begin{align*}
A(x,1,v,q)&=\frac{\sum_{j\geq0}\frac{q^{j+1}(A(x,vL_{j}(x,1))-A(x,v))}{\sqrt{x}U_{j+2}(t)U_{j+1}(t)}}{1+\sum_{j\geq0}\frac{q^{j+1}}{\sqrt{x}U_{j+2}(t)U_{j+1}(t)}}.
\end{align*}
Hence, we can state the following result.

\begin{theorem}\label{th3}
The generating function $A(x,w,v,q)$ is given by
\begin{equation}\label{th3e1}
A(x,w,v,q)=q\sum_{j\geq0}\frac{w\sqrt{x}q^{j}\left(A(x,vL_{j}(x,w))-A(x,v)-\frac{q\sum_{i\geq0}\frac{q^{i}(A(x,vL_{i}(x,1))-A(x,v))}{\sqrt{x}U_{i+2}(t)U_{i+1}(t)}}{1+q\sum_{i\geq0}\frac{q^{i}}{\sqrt{x}U_{i+2}(t)U_{i+1}(t)}}\right)}{(U_{j+1}(t)-w\sqrt{x}U_j(t))(U_{j}(t)-w\sqrt{x}U_{j-1}(t))},
\end{equation}
where $t=\frac{1}{2\sqrt{x}}$, $A(x,v)=\frac{xv}{1-xvC(x)}$ and $L_j(x,w)=1/(1-xL_{j-1}(x,w))$ for $j\geq1$, with $L_{0}(x,w)=\frac{1}{1-xw}$.
\end{theorem}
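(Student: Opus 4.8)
The plan is to derive Theorem \ref{th3} directly from the recurrences of Proposition \ref{pr3} by introducing, one variable at a time, generating functions that track each of the parameters $t$ (zeros), $s$ (occurrences of $i$), and $i$ (the chosen letter), and then to solve the resulting four-variable functional equation by the same iteration-plus-Chebyshev technique used in Lemma \ref{l2}. First I would multiply \eqref{rec1} and \eqref{rec1.5} by $x^n$ and sum, producing recurrences for $A_i(x|s,t)$; then I would sum against $w^t$, $v^s$, and $q^i$ successively, at each stage converting the binomial factor $\binom{\ell+t-1}{\ell}-1$ into the operator that sends $A(x,\cdots)$ to $\frac{xw}{1-xw}(A(x,\cdots/(1-xw))-A(x,\cdots))$, exactly as in the passage from \eqref{coe1} to \eqref{l1e1}. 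This yields the functional equation
$$A(x,w,v,q)=\frac{xwq}{1-xw}(A(x,v/(1-xw))-A(x,v))+\frac{xwq}{1-xw}(A(x,1/(1-xw),v,q)-A(x,1,v,q)),$$
in which the unknown $A(x,w,v,q)$ reappears with $w$ replaced by $1/(1-xw)$.

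Next I would iterate this equation an infinite number of times, tracking the argument of $w$ through the maps $L_j(x,w)=1/(1-xL_{j-1}(x,w))$ introduced in Lemma \ref{l2}. Each iteration contributes a factor $xqL_{j-1}(x,w)L_j(x,w)$, coming from $\frac{xw}{1-xw}$ together with the relation $1-xL_j=1/L_{j+1}$, so that after $j+1$ steps the accumulated coefficient is the telescoping product $L_{-1}L_0^2 L_1^2\cdots L_{j-1}^2 L_j$ times $x^{j+1}q^{j+1}$. Substituting the closed form \eqref{l2e4} for $L_j$ in terms of Chebyshev polynomials and observing the cancellation collapses this product to $\frac{w\sqrt{x}}{(U_{j+1}(t)-w\sqrt{x}U_j(t))(U_j(t)-w\sqrt{x}U_{j-1}(t))}$, just as the analogous products simplified in the proof of Lemma \ref{l2}. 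The forcing term at each level is $A(x,vL_j(x,w))-A(x,v)-A(x,1,v,q)$, with $A(x,v)$ already known from Theorem \ref{th2} to equal $\frac{xv}{1-xvC(x)}$, which is why \eqref{th2e2} enters the final statement.

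The remaining difficulty, and the part I expect to be the real obstacle, is that the iterated series still contains the unknown specialization $A(x,1,v,q)$ inside the summand, so the iteration does not close by itself. The resolution is to set $w=1$ in the iterated identity: since $L_j(x,1)$ and the denominators reduce to $\sqrt{x}U_{j+2}(t)U_{j+1}(t)$, this produces a single linear equation in which $A(x,1,v,q)$ appears both on the left and, summed against a geometric-type series in $q$, on the right. Solving that linear equation gives
$$A(x,1,v,q)=\frac{\sum_{j\geq0}\frac{q^{j+1}(A(x,vL_j(x,1))-A(x,v))}{\sqrt{x}U_{j+2}(t)U_{j+1}(t)}}{1+\sum_{j\geq0}\frac{q^{j+1}}{\sqrt{x}U_{j+2}(t)U_{j+1}(t)}},$$
and substituting this expression back into the general-$w$ iterated formula produces \eqref{th3e1}. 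The care needed throughout is essentially bookkeeping: keeping the shifted arguments $vL_j(x,w)$ straight, verifying that the $w=1$ specialization of the Chebyshev denominators is correct, and confirming that the series in $q$ converge as formal power series so that the linear solve for $A(x,1,v,q)$ is legitimate.
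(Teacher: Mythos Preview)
Your proposal is correct and follows essentially the same route as the paper: you derive the same four-variable functional equation from Proposition~\ref{pr3} by successive summations, iterate it through the maps $L_j$, collapse the telescoping product via \eqref{l2e4} into the Chebyshev denominators, then specialize $w=1$ to solve linearly for $A(x,1,v,q)$ and substitute back. The only points worth double-checking are exactly the bookkeeping items you flag yourself, and these all go through as in Lemma~\ref{l2}.
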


Note that the coefficient of $q^i$ in $A(x,1,v,q)$ is the generating function counting members of $\mathcal{L}_n$ containing at least one $i$ according to the number of occurrences of $i$, where $i>0$.

We now consider the case when $s=0$.  Let $A_i(x|0,t)=\sum_{n\geq t} a_i(n,0,t)x^n$. Then recurrence \eqref{rec2} can be written as
$$A_i(x|0,t)=x^t+x^t\sum_{\ell\geq1}\left[\binom{\ell+t-1}{\ell}-1\right]A_{i-1}(x|0,\ell), \qquad i \geq 1,$$
with $A_0(x|0,t)=0$.
Let $A_i(x,w|0)=\sum_{t\geq1}A_i(x|0,t)w^t$.  Multiplying the last recurrence by $w^t$ and summing over $t\geq1$, we obtain
$$A_i(x,w|0)=\frac{xw}{1-xw}\left(1+A_{i-1}(x,1/(1-xw)|0)-A_{i-1}(x,1|0)\right), \qquad i \geq 1,$$
with $A_0(x,w|0)=0$.
Define $A(x,w,q|0)=\sum_{i\geq0}A_i(x,w|0)q^i$. Thus, we have

$$A(x,w,q|0)=\frac{xwq}{(1-xw)(1-q)}+\frac{xwq}{1-xw}\left(A(x,1/(1-xw),q|0)-A(x,1,q|0)\right).$$

By iterating the preceding equation an infinite number of times, we obtain
$$A(x,w,q|0)=\sum_{j\geq0}\prod_{i=1}^{j+1}\frac{xqL_{i-2}(x,w)}{1-xL_{i-2}(x,w)}\left(\frac{1}{1-q}-A(x,1,q|0)\right),$$
which is equivalent to
$$A(x,w,q|0)=\sum_{j\geq0}x^{j+1}q^{j+1}L_{-1}(x,w)L_0^2(x,q)L_1^2(x,q)\cdots L_{j-1}^2(x,w)L_j(x,w)\left(\frac{1}{1-q}-A(x,1,q|0)\right),$$
since $1-xL_j(x,w)=\frac{1}{L_{j+1}(x,w)}$.
By \eqref{l2e4}, we have
$$A(x,w,q|0)=\sum_{j\geq0}\frac{w\sqrt{x}q^{j+1}}{(U_{j+1}(t)-w\sqrt{x}U_j(t))(U_{j}(t)-w\sqrt{x}U_{j-1}(t))}\left(\frac{1}{1-q}-A(x,1,q|0)\right).$$

Setting $w=1$ gives
\begin{align*}
A(x,1,q|0)
&=\sum_{j\geq0}\frac{\sqrt{x}q^{j+1}}{(U_{j+1}(t)-\sqrt{x}U_j(t))(U_{j}(t)-\sqrt{x}U_{j-1}(t))}\left(\frac{1}{1-q}-A(x,1,q|0)\right)\\
&=\sum_{j\geq0}\frac{q^{j+1}}{\sqrt{x}U_{j+2}(t)U_{j+1}(t)}\left(\frac{1}{1-q}-A(x,1,q|0)\right),
\end{align*}
and solving for $A(x,1,q|0)$ implies
\begin{align*}
A(x,1,q|0)
&=\frac{\frac{q}{1-q}\sum_{j\geq0}\frac{q^{j}}{\sqrt{x}U_{j+2}(t)U_{j+1}(t)}}{1+q\sum_{j\geq0}\frac{q^{j}}{\sqrt{x}U_{j+2}(t)U_{j+1}(t)}}.
\end{align*}
Hence, we can state the following result.

\begin{theorem}\label{th4}
The generating function $A(x,w,q|0)$ is given by
\begin{equation}\label{th4e1}
A(x,w,q|0)=\frac{q\sum_{j\geq0}\frac{w\sqrt{x}q^{j}}{(U_{j+1}(t)-w\sqrt{x}U_j(t))(U_{j}(t)-w\sqrt{x}U_{j-1}(t))}}{(1-q)\left(1+q\sum_{j\geq0}\frac{q^{j}}{\sqrt{x}U_{j+2}(t)U_{j+1}(t)}\right)},
\end{equation}
where $t=\frac{1}{2\sqrt{x}}$.
\end{theorem}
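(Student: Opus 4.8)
The plan is to obtain \eqref{th4e1} by the same functional-equation-and-Chebyshev strategy used for Lemma \ref{l2}, applied now to the recurrence \eqref{rec2}. First I would build up the relevant generating function in three stages, introducing one variable at a time. Multiplying \eqref{rec2} by $x^n$ and summing over $n \geq t$ produces a recurrence for $A_i(x|0,t)$, the $\delta_{n,t}$ term accounting for the free $x^t$. Next, weighting by $w^t$ and summing over $t$ (which, as in the derivation of \eqref{l1e2}, produces the argument substitution $w \mapsto 1/(1-xw)$) collapses this to
$$A_i(x,w|0)=\frac{xw}{1-xw}\left(1+A_{i-1}(x,1/(1-xw)|0)-A_{i-1}(x,1|0)\right), \qquad i \geq 1,$$
with $A_0(x,w|0)=0$. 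Finally, introducing $q$ to mark $i$ and summing the family over $i \geq 0$ turns this into the single functional equation
$$A(x,w,q|0)=\frac{xwq}{(1-xw)(1-q)}+\frac{xwq}{1-xw}\left(A(x,1/(1-xw),q|0)-A(x,1,q|0)\right).$$

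The core of the argument is to solve this equation by infinite iteration. Writing $L_0(x,w)=1/(1-xw)$ and recalling $L_{-1}(x,w)=w$, each application of the substitution $w \mapsto 1/(1-xw)$ advances the index of $L_j(x,w)$ by one and contributes a factor $xqL_{j-1}(x,w)L_j(x,w)$; since the inhomogeneous piece $\frac{1}{1-q}-A(x,1,q|0)$ is constant in $w$, it factors out of every iterate. This yields
$$A(x,w,q|0)=\left(\frac{1}{1-q}-A(x,1,q|0)\right)\sum_{j\geq0}x^{j+1}q^{j+1}L_{-1}(x,w)L_0^2(x,w)\cdots L_{j-1}^2(x,w)L_j(x,w).$$
Substituting the closed form \eqref{l2e4} for $L_j(x,w)$ and carrying out the identical telescoping as in the passage from \eqref{l2e3} to \eqref{l2e5} (with $v$ replaced by $w$), each product collapses and the summand becomes $\frac{w\sqrt{x}q^{j+1}}{(U_{j+1}(t)-w\sqrt{x}U_j(t))(U_j(t)-w\sqrt{x}U_{j-1}(t))}$, where $t=\frac{1}{2\sqrt{x}}$.

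It remains to determine the still-unknown quantity $A(x,1,q|0)$ that appears on the right. Here I would specialize $w=1$ in the telescoped identity: by the defining relation for the Chebyshev polynomials one has $U_{j+1}(t)-\sqrt{x}U_j(t)=\sqrt{x}U_{j+2}(t)$, so each summand simplifies to $\frac{q^{j+1}}{\sqrt{x}U_{j+2}(t)U_{j+1}(t)}$ and the identity becomes a single linear equation in $A(x,1,q|0)$. Solving it gives $A(x,1,q|0)=\frac{q}{1-q}\cdot\frac{S}{1+qS}$, where $S=\sum_{j\geq0}\frac{q^j}{\sqrt{x}U_{j+2}(t)U_{j+1}(t)}$; consequently $\frac{1}{1-q}-A(x,1,q|0)$ simplifies neatly to $\frac{1}{(1-q)(1+qS)}$. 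Back-substituting this factor into the telescoped formula for general $w$ and pulling one $q$ out of the sum yields \eqref{th4e1}.

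The main obstacle I anticipate is the self-referential appearance of $A(x,1,q|0)$ inside its own iterated expansion, which blocks a direct evaluation; the resolution is precisely the $w=1$ specialization, which decouples this term and reduces it to a scalar linear equation. A secondary point requiring care is confirming that the Chebyshev telescoping is word-for-word the same as in Lemma \ref{l2}, so that no new cancellation identity beyond \eqref{l2e4} and the recurrence $U_{j+1}(t)=\frac{1}{\sqrt{x}}U_j(t)-U_{j-1}(t)$ is needed; this is what allows the products to be replaced by the compact Chebyshev quotients appearing in \eqref{th4e1}.
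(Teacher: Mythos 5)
Your proposal is correct and follows essentially the same route as the paper: build $A_i(x|0,t)$, $A_i(x,w|0)$, and $A(x,w,q|0)$ in stages from \eqref{rec2}, iterate the resulting functional equation infinitely while factoring out the $w$-independent quantity $\frac{1}{1-q}-A(x,1,q|0)$, telescope via \eqref{l2e4}, and then specialize $w=1$ to solve a linear equation for $A(x,1,q|0)$ before back-substituting. The details (including the simplification $U_{j+1}(t)-\sqrt{x}U_j(t)=\sqrt{x}U_{j+2}(t)$ and the identity $\frac{1}{1-q}-A(x,1,q|0)=\frac{1}{(1-q)(1+qS)}$) all check out and match the paper's argument.
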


\end{document}